\numberwithin{equation}{section}
\newtheorem{Definition}{Definition}[section]
\newtheorem{definition}[Definition]{Definition}
\newtheorem{theorem}[Definition]{Theorem}
\newtheorem{proposition}[Definition]{Proposition}
\newcommand{\mbn}{\mathbb{N}}
\begin{document}
\title{\Large \bf On the power graph of the direct product of two groups}
\author{A. K. Bhuniya and Sajal Kumar Mukherjee}
\date{}
\maketitle

\begin{center}
Department of Mathematics, Visva-Bharati, Santiniketan-731235, India. \\
anjankbhuniya@gmail.com, shyamal.sajalmukherjee@gmail.com
\end{center}

\begin{abstract}
The power graph $P(G)$ of a finite group $G$ is the graph with vertex set $G$ and two distinct vertices are adjacent if either of them is a power of the other. Here we show that the power graph $P(G_1 \times G_2)$ of the direct product of two groups $G_1$ and $G_2$ is not isomorphic to either of the direct, cartesian and normal product of their power graphs $P(G_1)$ and $P(G_2)$. A new product of graphs, namely generalized product, has been introduced and we prove that the power graph $P(G_1 \times G_2)$ is isomorphic to a generalized product of $P(G_1)$ and $P(G_2)$.
\end{abstract}

\noindent
\textbf{Keywords:} finite groups; direct product; power graphs; product of graphs; isomorphism.
\\ \textbf{AMS Subject Classifications:} 05C25

\section{Introduction}
The directed power graph of a semigroup was defined by Kelarev and Quinn \cite{K}. Then Chakraborty
et. al \cite{CGS} defined the undirected power graph $P(S)$ of a semigroup $S$ as the graph with vertex set $S$ and two distinct vertices $a$ and $b$ are adjacent if either $a^m=b$ or $b^n=a$ for some $m, n \in \mbn$. There are many articles associating group theoretic behavior of $G$ and the graph theoretic properties of $P(G)$. We refer to the survey \cite{survey} for an account of the development on the power graph of groups and semigroups. Success attained by the researchers in this direction influenced others to generalize power graph of groups to strong power graph \cite{cb}, deleted power graph $P^*(G)$ \cite{MRS}, enhanced power graph \cite{Aalipour}.

Here we investigate relationship of $P(G_1 \times G_2)$ with $P(G_1)$ and $P(G_2)$. There are many standard products of graphs, already defined, namely, direct product, cartesian product, normal product  etc. Here we show that, in general, $P(G_1\times G_2 )$ is not isomorphic to either of these products of $P(G_1)$ and $P(G_2)$. So we introduce a new product of two graphs $\Gamma_1$ and $\Gamma_2$, which we call generalized product of $\Gamma_1$ and $\Gamma_2$. Reason behind such naming is that each of the cartesian, direct and normal products is a special case of the generalized product of two graphs.

Our main theorem states that $P(G_1 \times G_2)$ is isomorphic to a generalized product of the power graphs $P(G_1)$ and $P(G_2)$.

\section{Main result}
We refer to \cite{Godsil} and \cite{w} for the notions on graph theory and to \cite{Hunger} for group theoretic background.

Throughout this article $\mathbb{N}$ denotes the set of natural  numbers and $\mathbb{Z}^\sharp=\mathbb{N} \bigcup \{0\}$.

Let us recall different standard notions of products of graphs.
\begin{definition}
Let $\Gamma_1$ and $\Gamma_2$ be two graphs.
\begin{itemize}
\item[(i)]
The direct product $\Gamma_1 \times \Gamma_2$ of $\Gamma_1$ and $\Gamma_2$ is defined as follows:
\begin{center}
 $V(\Gamma_1 \times \Gamma_2)= V(\Gamma_1) \times V(\Gamma_2)$ and $(g_1, g_2) \sim (\acute{g_1},\acute{g_2})$ if and only if $ g_1 \sim \acute{g_1}$ and $g_2 \sim \acute{g_2}$.
\end{center}
\item[(ii)]
The cartesian product $\Gamma_1 \boxtimes \Gamma_2$ of $\Gamma_1$ and $\Gamma_2$ is defined as follows:
\begin{center}
 $V(\Gamma_1\boxtimes \Gamma_2) = V(\Gamma_1)\times V(\Gamma_2)$ and $(g_1,g_2)\sim (\acute{g_1},\acute{g_2})$ if and only if either $g_1 = \acute{g_1}$ and $g_2 \sim \acute{g_2}$ or $g_1 \sim \acute{g_1}$ and $g_2 = \acute{g_2}$.
\end{center}
\item[(iii)]
The normal product $\Gamma_1 \ast \Gamma_2$ of $\Gamma_1$ and $\Gamma_2$ is defined as follows:
\begin{center}
 $V(\Gamma_1 \ast \Gamma_2) = V(\Gamma_1) \times V(\Gamma_2)$ and $(g_1,g_2)\sim (\acute{g_1},\acute{g_2})$ if and only if either $ g_1 \sim \acute{g_1}$ and $g_2 \sim \acute{g_2}$ or $g_1 = \acute{g_1}$ and $g_2 \sim \acute{g_2}$ or     $ g_1 \sim \acute{g_1}$ and $g_2 = \acute{g_2}$.
\end{center}
\end{itemize}
\end{definition}

Now we show that neither of these notions of product of graphs is enough to catch the relationship of $P(G_1 \times G_2)$ with $P(G_1)$ and $P(G_2)$.
\begin{proposition}
Let $G_1$ and $G_2$ be two nontrivial finite groups. Then $P(G_1\times G_2)$ is not isomorphic to $P(G_1) \boxtimes P(G_2)$.
\end{proposition}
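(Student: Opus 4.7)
My plan is to show that $P(G_1\times G_2)$ is a \emph{proper spanning subgraph} of $P(G_1)\boxtimes P(G_2)$, so the two graphs have strictly different numbers of edges and cannot be isomorphic. This route sidesteps the fact that $(e_1,e_2)$ is universal in both graphs (so a ``degree of the identity'' comparison yields nothing); instead I exploit the asymmetry that a power-of-the-other relation in $G_1\times G_2$ requires a \emph{single} exponent applied simultaneously in both coordinates, whereas the $\boxtimes$-adjacency rule checks the two coordinates independently.

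For the containment, let $\{(g_1,g_2),(h_1,h_2)\}$ be an edge of $P(G_1\times G_2)$. Without loss of generality $(g_1,g_2)^{k}=(h_1,h_2)$ for some $k\in\mathbb{N}$, which coordinatewise reads $g_1^{k}=h_1$ and $g_2^{k}=h_2$; hence in each coordinate either $g_i=h_i$ or $g_i\sim h_i$ in $P(G_i)$, verifying the $\boxtimes$-adjacency rule. For the properness, since $G_1$ and $G_2$ are nontrivial, fix $g_1\in G_1\setminus\{e_1\}$ and $g_2\in G_2\setminus\{e_2\}$ and examine the pair $(g_1,e_2),(e_1,g_2)$. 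As $e_i$ is adjacent to every other vertex of $P(G_i)$, we have $g_1\sim e_1$ in $P(G_1)$ and $e_2\sim g_2$ in $P(G_2)$, so this pair is an edge of $P(G_1)\boxtimes P(G_2)$. However, it is not an edge of $P(G_1\times G_2)$: an identity $(g_1,e_2)^{k}=(e_1,g_2)$ would force $e_2^{k}=g_2$, which fails because $e_2^{k}=e_2\ne g_2$, and $(e_1,g_2)^{k}=(g_1,e_2)$ would force $e_1^{k}=g_1$, contradicting $g_1\ne e_1$.

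The only real obstacle is noticing the containment step: one has to see that the shared exponent $k$ in $(g_1,g_2)^{k}=(g_1^{k},g_2^{k})$ automatically produces the two power relations that the $\boxtimes$-rule wants, so every edge of $P(G_1\times G_2)$ is inherited. Nontriviality of both factors is used only to supply the witness pair in the properness step, and no further group-theoretic input is needed. Once both inclusions are in place, the edge counts cannot agree, and the isomorphism is ruled out.
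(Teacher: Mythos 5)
There is a genuine gap: your entire argument is built on the wrong adjacency rule for $\boxtimes$. In this paper $\Gamma_1 \boxtimes \Gamma_2$ denotes the \emph{cartesian} product, in which $(g_1,g_2)\sim(\acute{g_1},\acute{g_2})$ only when one coordinate is \emph{equal} and the other is adjacent; you are using the rule ``each coordinate is equal or adjacent,'' which is the normal (strong) product $\ast$ — an understandable slip, since $\boxtimes$ usually denotes the strong product in the literature, but it breaks every step here. First, the claimed containment $E(P(G_1\times G_2))\subseteq E(P(G_1)\boxtimes P(G_2))$ is false: for nonidentity $g_1,g_2$ the pair $(g_1,g_2)$ satisfies $(g_1,g_2)^k=(e_1,e_2)$ for suitable $k$, so it is adjacent to $(e_1,e_2)$ in $P(G_1\times G_2)$, yet it is not adjacent to $(e_1,e_2)$ in $P(G_1)\boxtimes P(G_2)$ because neither coordinate is equal. (In fact neither containment holds in general: in $\mathbb{Z}_2\times\mathbb{Z}_2$ the vertices $(1,1)$ and $(1,0)$ are $\boxtimes$-adjacent but not adjacent in the power graph, since no single exponent $k$ can be simultaneously odd and even.) Second, your witness pair $(g_1,e_2),(e_1,g_2)$ is \emph{not} an edge of $P(G_1)\boxtimes P(G_2)$ under the paper's definition, for the same reason: both coordinates differ. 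Third, your opening remark that $(e_1,e_2)$ is universal in both graphs, so that a degree comparison ``yields nothing,'' is also a casualty of the misreading — $(e_1,e_2)$ is universal in $P(G_1\times G_2)$ but $P(G_1)\boxtimes P(G_2)$ has no universal vertex at all, and that is exactly the comparison the paper makes.

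To salvage your write-up: the non-edge you should exhibit in $P(G_1)\boxtimes P(G_2)$ is precisely $\{(e_1,e_2),(g_1,g_2)\}$ with $g_1\ne e_1$ and $g_2\ne e_2$, and the cleanest isomorphism-invariant phrasing is that $P(G_1\times G_2)$ has a vertex of degree $|G_1||G_2|-1$ while $P(G_1)\boxtimes P(G_2)$ does not (any vertex $(a,b)$ of the latter fails to be adjacent to any $(c,d)$ with $c\ne a$ and $d\ne b$, and such $(c,d)$ exist by nontriviality). Your subgraph argument, as written, is a correct proof of a \emph{different} true statement, namely that $P(G_1\times G_2)\not\cong P(G_1)\ast P(G_2)$: there the containment does hold, and $\{(g_1,e_2),(e_1,g_2)\}$ genuinely is an edge of the normal product missing from the power graph. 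The paper disposes of that case separately with the explicit $\mathbb{Z}_2\times\mathbb{Z}_2$ computation, so your argument would actually generalize that example — but it does not prove the proposition as stated.
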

\begin{proof}
Suppose, on the contrary that the two graphs, stated in the theorem are isomorphic. Let $e_i$ be the identity element of the group $G_i$. Let $g_1, g_2$ be two nonidentity elements of $G_1$ and $G_2$ respectively. Then $(e_1,e_2)$ is not adjacent to $(g_1, g_2)$ in $P(G_1)\boxtimes P(G_2)$, whereas $(e_{G_1}, e_{G_2}) \sim (g_1, g_2)$ in $P(G_1\times G_2)$. A contradiction.
\end{proof}

Consider $G_1 = G_2 = \mathbb{Z}_2$. Then $P(\mathbb{Z}_2 \times \mathbb{Z}_2)$ has precisely three edges, each edge emanating from the identity of $ \mathbb{Z}_2\times \mathbb{Z}_2$  and connects the remaining three vertices, whereas $P(\mathbb{Z}_2)\ast P(\mathbb{Z}_2)$ is the complete graph $K_4$ and $P(\mathbb{Z}_2)\times P(\mathbb{Z}_2)$ is a graph with precisely two edges.

Thus we show that $P(G_1\times G_2)$ is neither isomorphic to the direct product nor to the normal product of $P(G_1)$ and $P(G_2)$, in general.

So we need to introduce another product of graphs to uncover the relationship of the power graphs $P(G_1)$ and $P(G_2)$ with $P(G_1 \times G_2)$ for any two groups $G_1$ and $G_2$.

Before going into technical details, note the following fact. Let $G$ be a finite group and $a, b \in G $. Suppose that $a \sim b$ in $P(G)$. It is easy to observe that if $n$ is the smallest positive integer such that $a^n = b$, then $\{m \in \mathbb{N} : a^m = b\}$ is the arithmetic progression with initial term $n$ and common difference $o(a)$.

For any two integers $a$ and $d$, we denote the arithmetic progression with initial term $a$ and common difference $d$ by $AP(a, d)$.

Let $\Gamma$ be a graph. Then by a generalization on $\Gamma$ we mean a function $W: A(\Gamma)\bigcup \triangle \rightarrow \mathbb{Z}^\sharp \times  \mathbb{Z}^\sharp  $, where $A(\Gamma)$ is the arc set of $\Gamma$ and $\triangle = \{ (v, v): v \; \textrm{is a vertex of} \; \Gamma\}$.
\begin{definition}
Let $(\Gamma_1,W_1)$ and $(\Gamma_2,W_2)$ be two graphs equipped with two generalizations $W_1$, $W_2$  respectively. Then the generalized product $\Gamma_1 \ _{W_1}\times_{W_2} \Gamma_2$ is a graph with vertex set $V(\Gamma_1) \times V(\Gamma_2)$ and $(g_1,g_2) \sim (\acute{g_1},\acute{g_2})$ if and only if the following two conditions hold simultaneously:
\begin{enumerate}
\item[(i)]
$(g_1,g_2)\neq (\acute{g_1},\acute{g_2})$ and
\item[(ii)]
$AP(W_1(g_1,\acute{g_1})) \cap AP(W_2(g_2,\acute{g_2})) \cap \mathbb{N} \neq \phi$  or  $AP(W_1(\acute{g_1},g_1)) \cap AP(W_2(\acute{g_2},g_2)) \cap \mathbb{N} \neq \phi$.
\end{enumerate}
\end{definition}

If no question of ambiguity arise, then we denote a generalized direct product of two graphs $\Gamma_1$ and $\Gamma_2$ by $\Gamma_1 \times_W \Gamma_2$.

The following result justifies the name `generalized product'.

\begin{theorem}
Each of the direct, cartesian and normal products is a generalized product.
\end{theorem}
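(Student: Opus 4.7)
My plan is to prove the theorem by exhibiting, for each of the three product types, explicit generalizations $W_1$ on $\Gamma_1$ and $W_2$ on $\Gamma_2$ such that $\Gamma_1 \ _{W_1}\times_{W_2} \Gamma_2$ realizes the desired product. The key observation I will use repeatedly is that $AP(a,d) \cap \mathbb{N}$ is empty when $(a,d)=(0,0)$, equals the singleton $\{a\}$ when $d = 0$ and $a \geq 1$, and equals $\mathbb{N}$ when $(a,d)=(1,1)$. I also use the convention (implicit in the definition, where $W$ is defined only on $A(\Gamma)\cup\triangle$) that if $W_i$ is undefined on an input pair, the corresponding clause of condition (ii) fails and no edge arises.

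For the direct product, I would set $W_i(u,v)=(1,1)$ on every arc of $\Gamma_i$ and $W_i(v,v)=(0,0)$ on the diagonal. Then condition (ii) is satisfied precisely when both $W_1(g_1,\acute{g_1})$ and $W_2(g_2,\acute{g_2})$ are evaluated on arcs, which recovers the direct product adjacency. For the normal product, I would instead take $W_i\equiv(1,1)$ on both arcs and the diagonal, so every defined AP equals $\mathbb{N}$; condition (ii) then reduces to the requirement that $W_1$ and $W_2$ be defined on the input pair, i.e., each coordinate is equal or adjacent, which combined with condition (i) is exactly the normal product.

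The cartesian product is the case I expect to be the real obstacle, since here I need (arc, diagonal) and (diagonal, arc) pairs to contribute edges while (arc, arc) pairs do not. The trick is to give arcs of $\Gamma_1$ and $\Gamma_2$ degenerate APs supported on disjoint singletons: set $W_1(u,v)=(1,0)$ on arcs of $\Gamma_1$ and $W_2(u,v)=(2,0)$ on arcs of $\Gamma_2$, so that the arc-APs intersected with $\mathbb{N}$ become $\{1\}$ and $\{2\}$ respectively; and set $W_i(v,v)=(1,1)$ on the diagonal of each $\Gamma_i$. Then (arc, arc) yields $\{1\}\cap\{2\}=\emptyset$, while (arc, diagonal) gives $\{1\}\cap\mathbb{N}\neq\emptyset$ and (diagonal, arc) gives $\mathbb{N}\cap\{2\}\neq\emptyset$, reproducing the cartesian adjacency exactly. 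A short case check against the remaining situations, where one coordinate pair lies outside $A(\Gamma_i)\cup\triangle$ and hence $W_i$ is undefined, shows no spurious edges appear, and the verification is complete.
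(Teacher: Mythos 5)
Your proposal is correct and follows essentially the same strategy as the paper: exhibiting explicit constant-on-arcs, constant-on-diagonal generalizations for each of the three products, with the identical $(1,0)$/$(2,0)$ trick for the cartesian case. The only (immaterial) difference is that for the normal product you assign $(1,1)$ to arcs where the paper assigns $(1,0)$; both yield nonempty intersections in exactly the required cases, and your version of the case analysis is if anything slightly more explicit than the paper's, which leaves the verification to the reader.
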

\begin{proof}
Let $\Gamma_1$ and $\Gamma_2$ be two graphs.
\begin{enumerate}
\item[(i)]
Consider the generalizations $W_1$ of $\Gamma_1$ defined by:
\begin{align*}
W_1(x,y) &= (1,1) \; \textrm{if} \; x \neq y \\
         &= (0,0) \; \textrm{if} \; x = y.
\end{align*}
and $W_2$ of $\Gamma_2$ defined similarly. Then it is easy to verify that $\Gamma_1 \times_W \Gamma_2 = \Gamma_1 \times \Gamma_2$.
\item[(ii)]
Consider the generalizations $W_1$ of $\Gamma_1$ defined by:
\begin{align*}
W_1(x,y) &= (1, 0) \; \textrm{if} \; x \neq y \\
         &= (1, 1) \; \textrm{if} \; x = y.
\end{align*}
and $W_2$ of $\Gamma_2$ defined by:
\begin{align*}
W_1(x,y) &= (2, 0) \; \textrm{if} \; x \neq y \\
         &= (1, 1) \; \textrm{if} \; x = y.
\end{align*}
Then $\Gamma_1 \times_W \Gamma_2 = \Gamma_1 \boxtimes \Gamma_2$.
\item[(iii)]
Consider the generalizations $W_1$ of $\Gamma_1$ defined by:
\begin{align*}
W_1(x,y) &= (1,0) \; \textrm{if} \; x \neq y \\
         &= (1, 1) \; \textrm{if} \; x = y.
\end{align*}
and $W_2$ of $\Gamma_2$ defined similarly. Then $\Gamma_1 \times_W \Gamma_2 = \Gamma_1 \ast \Gamma_2$.
\end{enumerate}
\end{proof}

Now, we prove our main theorem.
\begin{theorem}
For two groups $G_1$ and $G_2$, $P(G_1 \times G_2)$ and $P(G_1) \times_W P(G_2)$ are isomorphic for some choice of generalizations $W_1$ and $W_2$ of $P(G_1)$ and $P(G_2)$ respectively.
\end{theorem}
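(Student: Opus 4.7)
The plan is to take the identity map on vertex sets---legitimate because $V(P(G_1\times G_2))=G_1\times G_2=V(P(G_1)\times_W P(G_2))$---and to design $W_1,W_2$ so that $AP(W_i(g,g'))\cap\mathbb{N}$ is exactly $\{m\in\mathbb{N}:g^m=g'\}$ in each factor. The observation made just before the definition of a generalization---that $\{m\in\mathbb{N}:g^m=g'\}$, when nonempty, is the arithmetic progression with initial term the least such $m$ and common difference $o(g)$---tells us how to make this choice.

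Concretely, for $i=1,2$ I would set
\[
W_i(g,g')=\begin{cases}(1,o(g)) & \text{if }g=g',\\ (n,o(g)) & \text{if }g\neq g'\text{ and }n=\min\{m\in\mathbb{N}:g^m=g'\}\text{ exists},\\ (0,0) & \text{if }g\sim g'\text{ but no such }m\text{ exists},\end{cases}
\]
under the convention that $AP(0,0)\cap\mathbb{N}=\emptyset$. A short case analysis, using $g^m=g^n\iff o(g)\mid m-n$ together with the minimality of the chosen $n$, then yields the key identity
\[
AP(W_i(g,g'))\cap\mathbb{N}=\{m\in\mathbb{N}:g^m=g'\}
\]
for every $(g,g')\in A(P(G_i))\cup\triangle_i$.

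Granting this identity, the isomorphism falls out directly. Distinct vertices $(g_1,g_2)$ and $(g_1',g_2')$ are adjacent in $P(G_1\times G_2)$ iff some $m\in\mathbb{N}$ satisfies $g_1^m=g_1'$ and $g_2^m=g_2'$, or some $m\in\mathbb{N}$ satisfies $(g_1')^m=g_1$ and $(g_2')^m=g_2$; by the identity above, these two alternatives are precisely the two disjuncts of condition (ii) in the definition of the generalized product, while condition (i) matches the distinctness requirement in $P(G_1\times G_2)$. The one remaining situation is when $g_1\neq g_1'$ and $g_1\not\sim g_1'$ in $P(G_1)$, so that $W_1(g_1,g_1')$ is not defined; here $\{m\in\mathbb{N}:g_1^m=g_1'\}$ is already empty, so $(g_1,g_2)\not\sim(g_1',g_2')$ in $P(G_1\times G_2)$, and we just need to stipulate that the corresponding $AP$ in condition (ii) be treated as empty whenever the argument is outside $A(P(G_i))\cup\triangle_i$.

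The main obstacle is conceptual rather than computational: one has to encode in a single pair in $\mathbb{Z}^\sharp\times\mathbb{Z}^\sharp$ not only the nonempty AP of exponents, but also the reversed-direction case (where $g$ is a power of $g'$ but not conversely) and the absent case, in a way compatible with the intersection-with-$\mathbb{N}$ test built into the generalized product. Once the encoding above is fixed, the isomorphism verification reduces to the case check behind the key identity and a translation between the two adjacency definitions.
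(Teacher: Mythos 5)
Your proposal is correct and is essentially the paper's own proof: the authors use exactly the same generalization $W_i(a,b)=(t,o(a))$ with $t$ the least positive exponent (which on the diagonal gives your $(1,o(a))$) and $(0,0)$ otherwise, and the same identity-map verification via the arithmetic-progression observation. Your extra care about the case where $W_1(g_1,g_1')$ is undefined off $A(P(G_1))\cup\triangle$ is a reasonable tidying of a point the paper leaves implicit, but it does not change the argument.
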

\begin{proof}
Let us first specify the choice of the generalizations of $P(G_1)$ and $P(G_2)$. We consider the generalization $W_1$ of $P(G_1)$ defined by:
\begin{align*}
W_1(a, b) &= (t, o(a)) \; \textrm{if $t$ is the smallest positive integer such that $a^t = b$} \\
         &= (0, 0) \; \textrm{otherwise}.
\end{align*}
and the generalization $W_2$ of $P(G_2)$ defined similarly. 

Let $(g_1,g_2) \sim (\acute{g_1},\acute{g_2}) $ in $P(G_1 \times G_2) $. Then either $(g_1,g_2)^m = (\acute{g_1},\acute{g_2}) $ or $(\acute{g_1},\acute{g_2})^n = (g_1,g_2)$ for some $m, n \in \mbn$. If $(g_1,g_2)^m = (\acute{g_1},\acute{g_2})$ then $g_1^m = \acute{g_1}$ implies that $m \in AP(t, o(g_1))$ where $t$ is the smallest positive integer for which $g_1^t = \acute{g_1}$. Hence $m \in AP(W_1(g_1,\acute{g_1}))$. Similarly $g_2^m = \acute{g_2}$ implies that $m \in AP(W_2(g_2,\acute{g_2}))$. Thus $AP(W_1(g_1,\acute{g_1})) \cap AP(W_2(g_2,\acute{g_2})) \cap \mathbb{N} \neq \phi $ and so $(g_1,g_2) \sim (\acute{g_1},\acute{g_2})$ in $ P(G_1)\times_W P(G_2)$. If $(\acute{g_1},\acute{g_2})^n = (g_1,g_2)$ then $AP(W_1(\acute{g_1}), g_1) \cap AP(W_2(\acute{g_2}), g_2) \cap \mathbb{N} \neq \phi $ and so $(g_1,g_2) \sim (\acute{g_1},\acute{g_2})$ in $ P(G_1)\times_W P(G_2)$.

Conversely, let $(g_1,g_2) \sim (\acute{g_1},\acute{g_2})$ in $ P(G_1)\times_W P(G_2)$. If $m \in AP(W_1(g_1,\acute{g_1})) \cap AP(W_2(g_2,\acute{g_2})) \cap \mathbb{N} \neq \phi$, then $(g_1,g_2)^m = (\acute{g_1},\acute{g_2}) $  and hence $(g_1,g_2) \sim (\acute{g_1},\acute{g_2})$ in  $P(G_1 \times G_2)$. Similar is the other case.
\end{proof}

\noindent
\textbf{Acknowledgement:} The second author is partially supported by CSIR-JRF grant.

\bibliographystyle{amsplain}

\begin{thebibliography}{10}
\baselineskip 5mm

\bibitem{survey}
J. Abawajy, A. V. Kelarev and M. Chowdhury, Power graphs: A survey, \emph{Electron. J. Graph Theory Appl}, \textbf{1}(2013), 125-147.

\bibitem{Aalipour}
G. Aalipour, S. Akbari, P. J. Cameron, R. Nikandish, F. Shaveisi, On the structure of the power
graphs and the enhanced power graphs of a group, \emph{arXiv: 1603.04337v1(2016)[math. CO]}.

\bibitem{CGS}
I. Chakrabarty, S. Ghosh and M. K. Sen,  Undirected power graphs of semigroups, \emph{Semigroup
Forum }, \textbf{78}(2009), 410-426.

\bibitem{Godsil}
C. Godsil and G. Royle, \textit{Algebraic Graph Theory}, Springer-Verlag, New York Inc, 2001.

\bibitem{Hunger}
T. W. Hungerford, \textit{Algebra}, Graduate Text in Mathematics 73, Springer-Verlag, New York(NY), (1974).

\bibitem{K}
A. V. Kelarev and S. J. Quinn, A combinatorial property and power graphs of groups, \emph{Contributions to General Algebra}, \textbf{12}(Heyn, Klagenfurt, 2000), 229-235.

\bibitem{MRS}
A. R. Moghaddanfar, S. Rahbariyan, W. J. Shi, Certain properties of the power graph associated
with a finite group, \emph{arXiv: 1310.2032v1(2013)[math. GR]}.

\bibitem{cb}
G. Singh and K. Manilal, Some generalities on power graphs and strong power graphs, \emph{Int. J. Contemp. Math Sciences}, \textbf{5(55)}(2010), 2723-2730.

\bibitem{w}
D. B. West, \textit{Introduction to Graph theory}, 2nd ed., Pearson education, 2001.
\end{thebibliography}

\end{document}